\documentclass[11pt]{article}
\usepackage[T1]{fontenc}
\usepackage{lmodern}
\usepackage[margin=1in]{geometry}
\usepackage{amsmath,amssymb,amsthm,mathtools}
\usepackage{mathrsfs}
\usepackage{microtype}
\usepackage{xcolor}
\usepackage[colorlinks=true,linkcolor=blue!45!black,citecolor=blue!45!black,urlcolor=blue!45!black]{hyperref}
\hypersetup{pdftitle={Polynomial parametrization of SL2 over global function-field rings},pdfsubject={A proof using Larsen--Nguyen and Trost}}
\newtheorem{theorem}{Theorem}
\newtheorem{lemma}[theorem]{Lemma}

\theoremstyle{remark}

\newcommand{\SL}{\operatorname{SL}}
\newcommand{\Gal}{\operatorname{Gal}}
\newcommand{\tr}{\operatorname{tr}}
\newcommand{\Z}{\mathbb Z}
\newcommand{\F}{\mathbb F}
\newcommand{\cO}{\mathcal O}
\newcommand{\eps}{\varepsilon}

\def\F{\mathbb{F}}

\def\Z{\mathbb{Z}}

\def\cO{\mathcal{O}}

\def\cZ{\mathcal{Z}}

\def\fp{\mathfrak{p}}

\newcommand{\fl}{\mathfrak l}
\DeclareMathOperator{\Cl}{Cl}
\DeclareMathOperator{\Art}{Art}
\DeclareMathOperator{\Frob}{Frob}

\title{Polynomial parametrization of $\SL_2$\\over global function field rings}

\author{Dong Quan Ngoc Nguyen}
\date{October 8, 2026}

\begin{document}
\maketitle
\begin{abstract}
We combine the universal polynomial matrix identities of Larsen--Nguyen
with Trost's arithmetic lemmas to obtain polynomial parametrizations of
$\SL_2(\cO_{K,S})$ for every global function field $K$ and every nonempty
finite set of places $S$. The additional ingredient is a function-field
cancellation lemma, proved using ray class fields and Chebotarev in odd
characteristic and the perfectness of finite residue fields in characteristic
two. The proof also gives a polynomial matrix over $\Z$ that works
simultaneously for all these rings. 

\end{abstract}

\section{Statement and polynomial boundedness}

We follow the notion of polynomial boundedness from \cite{LN}. Let $R$ be a commutative ring. A subset $S\subset \SL_N(R)$ is \textit{polynomially bounded} if there exists an element $A(x_1, \ldots, x_n) \in \SL_N(\Z[x_1, \ldots, x_n])$ such that
\[
S \subset \{A(r_1, \ldots, r_n) | r_i \in R\},
\]
and $A(x_1, \ldots, x_n) = I$ has a solution in $R^n$, where $I$ denotes the identity matrix in $\SL_N(R)$. When $\SL_N(R)$ itself is polynomially bounded, we say it is \textit{polynomially parametrized}. 

A subset $\cZ \subseteq \SL_2(R) \times \SL_2(R)$ is said to be \textit{polynomially bounded} if there exist polynomially bounded sets $S, T \subseteq \SL_2(R)$ such that for all pairs $(M, N) \in \cZ$, there exist $X \in S$ and $Y \in T$ such that 
\[
N = XMY.
\]

An ordered pair $(a, b)$ is primitive if and only if the elements $a$ and $b$ generate the unit ideal. For primitive pairs $(a, b)$ and $(a', b')$, write $(a, b) \sim (a', b')$ if the set of pairs $(M, M') \in \SL_2(R) \times \SL_2(R)$ whose first rows are $(a \;\; b)$ and $(a' \; \; b')$, respectively is polynomially bounded.

For polynomials $P_1, Q_1, P_2, Q_2 \in \Z[t_1, \ldots, t_k]$ the relation $(P_1, Q_1) \sim (P_2, Q_2)$ indicates the following: (i) for any tuple $(a_1, \ldots, a_k) \in R^k$, the pair $(P_1(a_1, \ldots, a_k), Q_1(a_1, \ldots, a_k))$ is primitive if and only if the pair $(P_2(a_1, \ldots, a_k), Q_2(a_1, \ldots, a_k))$ is primitive; and (ii) the set of pairs $(X_1, X_2) \in \SL_2(R) \times \SL_2(R)$ such that for some $(a_1, \ldots, a_k) \in R^k$, the first row of $X_i$ is 
\[
(P_i(a_1, \ldots, a_k) \qquad Q_i(a_1, \ldots, a_k))
\]
for $i = 1, 2$ is polynomially bounded. Lemma 1.1 from \cite{LN} implies that the relation $\sim$ is an equivalence relation on $\Z[x_1, \ldots, x_k]^2$.

In Nguyen \cite{N}, the author proves that $\SL_2(\F_q[t])$ is polynomially parametrized, where $\F_q$ is the finite field of $q$ elements of odd characteristic. In this paper, we generalize this result to the ring of $S$-integers $\cO_{K, S}$ of a global function field $K$ in arbitrary positive characteristic and any nonempty finite set of places $S$ of $K$.

\begin{theorem}\label{thm:main}

There exists a polynomial matrix
\[
\mathcal P\in\SL_2\bigl(\Z[X_1,\ldots,X_N]\bigr)
\]
for some positive integer $N$ such that for every global function field $K$ and every nonempty finite
set of places $S$, the ring $R=\cO_{K,S}$ satisfies
\[
\mathcal P(R^{N})=\SL_2(R).
\]
In particular, $\SL_2(R)$ is polynomially parametrized.

\end{theorem}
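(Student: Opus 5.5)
Following the abstract, the strategy is to reduce, via the Larsen--Nguyen framework, to showing that every primitive pair $(a,b)\in R^2$ is $\sim$-equivalent, through a bounded chain of polynomial moves that is uniform in $R$, to the pair $(1,0)$. Larsen--Nguyen's universal polynomial matrix identities turn a chain of moves of the following kinds into a polynomial matrix over $\Z$ with a fixed number of variables: replacing $(a,b)$ by $(a+tb,b)$ or $(b,-a)$; the multiplicative moves $(a,bc)\sim(a,bc')$ when $c\equiv c' \pmod a$; and the ``cancellation'' steps coming from the Mennicke-type identities. Once we have such a chain of uniformly bounded length, $\SL_2(R)$ is a finite product of polynomially bounded sets. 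A finite product of polynomially parametrized pieces is parametrized by the product matrix, which is again in $\SL_2(\Z[X_1,\dots,X_N])$. Surjectivity $\mathcal P(R^N)=\SL_2(R)$ then holds for all $R$ at once, with the identity attained at the origin.

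The key steps, in order, are as follows.

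\textbf{Step 1.} Import Trost's arithmetic lemmas. For a primitive pair $(a,b)$ in a Dedekind domain with finite residue fields, one can change $a$ by multiples of $b$ so that $a$ generates a prime ideal, or a product of a bounded number of primes, in a prescribed ray class. Trost's lemmas also give the standard reductions $(a,b)\sim(a,b^k)$ and $(a,b)\sim(a,b')$ for $b\equiv b'$ modulo $a$, each with uniform polynomial data.

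\textbf{Step 2.} Prove the function-field cancellation lemma. Let $\fp=(a)$ be prime and let $b$ be a unit mod $\fp$. We want to write the class of $b$ in $(R/\fp)^\times$ as a bounded-degree power, e.g.\ a square or $u^2$ times a fixed element. This lets Step 1 trade $(a,b)$ for $(a,u^2)$, and then for $(a,u)$ after applying the Mennicke symmetry $(a,b)\sim(b,a)$ and a second prime-choice.

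\textbf{Step 3.} In characteristic $2$, Step 2 is immediate: the residue field $R/\fp$ is finite of characteristic $2$, hence perfect, so every residue is a square. In odd characteristic, use ray class fields of $K$ with conductor supported on $a$ and $b$, together with the Chebotarev density theorem. These produce a prime $\fp'\equiv a \pmod b$ (so $\fp'$ is generated by an element $a+tb$, using $S\neq\emptyset$ to handle the class group) at which $b$ becomes a quadratic residue. The quadratic reciprocity needed in $\F_q[t]$ is replaced by Artin reciprocity in the ray class field.

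\textbf{Step 4.} Conclude that $(a,b)\sim(1,0)$ via a chain of bounded length. Then assemble $\mathcal P$ by the Larsen--Nguyen identities, checking that all auxiliary polynomials have integer coefficients independent of $q$, $K$ and $S$.

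I expect the main obstacle to be Step 3 in odd characteristic. One must guarantee that the prime $a+tb$ can be found in a fixed $\fp$-residue/ray class simultaneously, with no dependence on the genus or class number. Otherwise the number of moves, and hence $N$, would depend on $K$. My first attempt is to show that a positive density of primes lie in the required intersection of a ray class and a Kummer (quadratic) splitting condition. For this I would check that the relevant quadratic extension $K(\sqrt b)$ is linearly disjoint from the ray class field of conductor $(b)$ over the Hilbert class field. Chebotarev then yields infinitely many such primes, with existence only and no bound on their size required, so uniformity is preserved.
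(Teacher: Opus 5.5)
\textbf{Gap 1: Step~4 has no mechanism.} You assert that every primitive pair is $\sim(1,0)$ through a chain of bounded length, but nothing in Steps~1--3 produces such a chain. Replacing entries by primes and extracting square roots can be iterated indefinitely without ever reaching $(1,0)$.

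The Trost input is also misdescribed. His lemmas are purely arithmetic:
Lemma~3.1 says that after at most three elementary operations the first row is $(a^m,b)$ with $bR$ prime, $m=q-1$; Lemma~3.3 gives $c$ with $bc\equiv-1\pmod{aR}$ and $\gcd(\eps(b),\eps(c))=m$. They give no relations like $(a,b)\sim(a,b^k)$.

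Two ideas are missing.
(i) A power-comparison lemma: the first row of $M^n$ is $\sim(a^n,b)$. The paper proves this from $M^n=u_nM-u_{n-1}I$ and the factorization $Q_n=V_nW_n$ with $V_n\mid Q_{n-1}-1$, $W_n\mid Q_{n-1}+1$, cancelling $w$ and then $v$ with the cancellation lemma.
(ii) The exponent trick: for $B=\begin{pmatrix}a&b\\c&d\end{pmatrix}$ choose $t-s=m$ with $\eps(b)\mid t$, $\eps(c)\mid s$, so that $a^t\equiv1\pmod{bR}$ and $a^s\equiv1\pmod{cR}$. Then $B^t$ and $(B^T)^s$ lie in fixed bounded families, hence so does $B^m=B^tB^{-s}$. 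Its first row is $\sim(a^m,b)$, the first row of the modified matrix.

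This is where boundedness and uniformity in $K,S$ come from. Since $\F_q^\times$ embeds in every $(R/zR)^\times$, $m$ is the smallest possible gcd, which is why the row must first be brought to the form $(a^m,b)$.

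\textbf{Gap 2: Step~3 fails as set up.} You want a prime $\fp'=(a+tb)R$ in the ray class of $aR$ modulo $bR$ at which $b$ is a square, i.e.\ Chebotarev in $H_{bR}K(\sqrt{b})$. But $K(\sqrt{b})$ has conductor dividing $bR$ and can lie inside $H_{bR}$. For example, take $R=\F_q[t]$ with $q$ odd and $b$ a monic nonsquare of even degree; then $\infty$ splits in $K(\sqrt{b})$. In that case the Frobenius of $\fp'$ in $K(\sqrt{b})$ is the restriction of $\Art_{bR}([\fp'])=\Art_{bR}([aR])$. So $b$ is a square mod $\fp'$ exactly when it is one mod $aR$. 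The linear disjointness you propose to check is false in general, and reciprocity blocks the step.

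The paper avoids this by keeping the radicand and the modulus coprime. For $(1+ab,bc)\sim(1+ab,c)$ it makes $c$ prime and prime to $a$, then makes $a$ prime. It then pairs $L=K(\sqrt{-a})$, ramified at $aR$, with the modulus $x=1+ab$, which is prime to $a$, so $L\cap H_{xR}=K$. A prime $c'\equiv c\pmod{xR}$ split in $L$ gives $a\equiv-r^2\pmod{c'R}$. Then
$(1+ab,bc)\sim(1-r^2b,(rb)^2c)\sim(1-r^2b,c)\sim(1+ab,c)$
via the Larsen--Nguyen identity $(1+uv,v^2w)\sim(1+uv,w)$ with $u=-r$, $v=rb$.

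\textbf{Two further unjustified moves.} That identity removes a square factor only when the first entry is $\equiv1$ modulo the square root, which your $(a,u^2)\sim(a,u)$ does not arrange. Also, $(a,b)\sim(b,a)$ is not a free move, since the swap has determinant $-1$; it becomes available only after a cancellation lemma has been proved.
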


We use the following ring-independent identity from
\cite[Proposition~1.3]{LN}:
\begin{equation}\label{eq:square}
(1+uv,v^2w)\sim(1+uv,w).
\end{equation}
It holds over every commutative ring, including rings of characteristic
two, and its published proof in \cite{LN} supplies fixed polynomial matrices over $\Z$.

Throughout the paper, $E_{ij}(a)$ denotes the elementary matrix whose diagonal entries are $1$, whose $(i, j)$-entry is $a$ for $i\ne j$, and whose other entries are $0$.

Rows with a unit entry are uniformly elementary reducible. For a unit $z$, put
\[
W(z):=E_{12}(z)E_{21}(-z^{-1})E_{12}(z).
\]
Then $W(z)W(-1)=\operatorname{diag}(z,z^{-1})$, where $z$ and $z^{-1}$ are viewed as independent parameters in the polynomial word, which has determinant one for arbitrary parameter values. Thus the diagonal matrices of the form $\operatorname{diag}(z,z^{-1})$ can be also written as a product of six elementary matrices.

\section{Selecting prime generators}
\label{sec-selecting-prime-generators}

In this section, we describe how to select prime generators in $R$ that will be useful in the proof of Lemma \ref{lem:cancel}. The analogue of Lemma \ref{lem:cancel} for the ring of integers in an imaginary quadratic field proved in Larsen and Nguyen \cite[Proposition 1.4]{LN} uses the Hasse theorem for primes in arithmetic progressions in number rings and the Hilbert reciprocity law. Our proof of Lemma \ref{lem:cancel} uses class field theory over global function fields instead.

Fix a nonzero proper ideal
\[
I=\prod_{\mathfrak{l}\mid I}\fl^{m_{\fl}}
\]
of $R$. Let $J_I$ be the group of nonzero fractional ideals of $R$ prime
to $I$. Its elements are products
\[
\mathfrak a=\prod_{\fp\nmid I}\fp^{n_{\fp}},\qquad n_{\fp}\in\Z,
\]
with only finitely many nonzero exponents.

Define the subgroup
\[
P_{I,1}=\{\gamma R:\gamma\in K^\times,\ \gamma\equiv1\pmod I\}.
\]
Since $\gamma$ need not belong to $R$, this congruence is interpreted
locally:
\[
v_{\fl}(\gamma-1)\geq m_{\fl}\qquad(\fl\mid I).
\]
Equivalently, $\gamma\in1+\fl^{m_{\fl}}R_{\fl}$ for each $\fl\mid I$.
In particular, $v_{\fl}(\gamma)=0$, so $\gamma R$ is indeed prime to $I$.

The \emph{$S$-ray class group modulo $I$} is
\begin{equation}\label{eq:ray-group}
\boxed{\Cl_I(R)=J_I/P_{I,1}.}
\end{equation}
Therefore
\begin{equation}\label{eq:equal-classes}
[\mathfrak a]=[\mathfrak b]
\quad\Longleftrightarrow\quad
\mathfrak a\mathfrak b^{-1}=\gamma R
\text{ for some }\gamma\equiv1\pmod I.
\end{equation}

\subsection{Artin isomorphism}

Let $\F_q$ denote the full constant field of $K$. Class field theory associates to \eqref{eq:ray-group} a finite abelian
extension $H_I/K$ and an isomorphism called the \textit{Artin isomorphism}
\begin{equation}\label{eq:artin}
\boxed{\Art_I:\Cl_I(R)\xrightarrow{\sim}\Gal(H_I/K).}
\end{equation}
The field $H_I$ is the $S$-ray class field modulo $I$. More precisely, put
$\mathfrak m=\sum_{\fl\mid I}m_{\fl}\fl$. Within a fixed algebraic closure,
$H_I$ is the maximal abelian extension whose conductor is bounded by
$\mathfrak m$ and in which all places of $S$ split completely.
In particular, it is unramified at all primes of $R$ not dividing $I$.
The conductor bound controls the depth of ramification at primes dividing
$I$; the condition ``unramified outside $I$'' alone does not specify this.
See Auer~\cite[Section~1]{Auer}, whose notation $K_S^{\mathfrak m}$
corresponds to $H_I$.

Let $\fp\nmid I$ be a prime of $R$, and choose a place $\mathfrak P$ of
$H_I$ above $\fp$. Write
\[
N\fp=|R/\fp|=q^{\deg\fp}.
\]
Since $\fp$ is unramified, the arithmetic Frobenius at $\mathfrak P$ is
the unique element of its decomposition group inducing
\[
\overline z\longmapsto\overline z^{\,N\fp}
\]
on the residue field at $\mathfrak P$. Equivalently,
\[
\Frob_{\mathfrak P/\fp}(z)\equiv z^{N\fp}\pmod{\mathfrak P}
\]
for elements $z$ integral at $\mathfrak P$. Because the extension is
abelian, Frobenius is independent of the chosen place above $\fp$; denote
it by $\Frob_{\fp}(H_I/K)$.

The Artin map is characterized by
\[
\Art_I([\fp])=\Frob_{\fp}(H_I/K).
\]
It extends multiplicatively:
\[
\Art_I\left(\left[\prod_{\fp}\fp^{n_{\fp}}\right]\right)
=\prod_{\fp}\Frob_{\fp}(H_I/K)^{n_{\fp}}.
\]
The reciprocity theorem says that the corresponding map on $J_I$ is
surjective with kernel exactly $P_{I,1}$. Thus it induces the Artin isomorphism
\eqref{eq:artin}. See Rosen~\cite[Chapter~9]{Rosen} for Artin reciprocity.

\subsection{From a prescribed ray class to a prime generator}

Let $0\ne d\in R$ satisfy $dR+I=R$. We seek $d'\in R$ such that
\[
d'R\text{ is prime},\qquad d'\equiv d\pmod I.
\]
Set $\sigma_d=\Art_I([dR])$. Chebotarev's theorem for global function fields (see \cite[Theorem 9.13A]{Rosen}) supplies infinitely many primes
$\fp\nmid I$ with
\[
\Frob_{\fp}(H_I/K)=\sigma_d
\]
(see Rosen~\cite[Theorem~9.13A]{Rosen}). Because \eqref{eq:artin} is
injective, this is equivalent to
\[
[\fp]=[dR]\quad\text{in }\Cl_I(R).
\]
By \eqref{eq:equal-classes}, there is $\gamma\in K^\times$ such that
\[
\fp(dR)^{-1}=\gamma R,\qquad \gamma\equiv1\pmod I.
\]
Hence $\fp=d\gamma R$. Define $d'=d\gamma$.

Two details justify the conclusion. First, $d'R=\fp\subseteq R$, so
$d'\in R$. Second, for every $\fl\mid I$, the element $d$ is a local
unit, whence
\[
v_{\fl}(d'-d)=v_{\fl}\bigl(d(\gamma-1)\bigr)
\geq m_{\fl}.
\]
Since $d'-d\in R$, these inequalities imply $d'-d\in I$. Thus
\begin{equation}\label{eq:generator}
\boxed{d'R=\fp,\qquad d'\equiv d\pmod I.}
\end{equation}

Chebotarev's theorem initially gives prime \emph{ideals}. The choice of the ray
class $[dR]$ forces them to be principal and supplies a generator in the
desired residue class. Since infinitely many such primes exist, any
fixed finite set of primes may be excluded.

\subsection{Adding a splitting condition in another extension}
Suppose $L/K$ is finite abelian and $L\cap H_I=K$. The restriction maps give
\begin{equation}\label{eq:product}
\Gal(H_IL/K)\xrightarrow{\sim}
\Gal(H_I/K)\times\Gal(L/K).
\end{equation}
Choose the automorphism corresponding to $(\sigma_d,1)$. Applying
Chebotarev's theorem to $H_IL/K$, and excluding ramified primes, gives infinitely
many primes $\fp$ satisfying
\[
\Art_I([\fp])= \Frob_{\fp}(H_I/K)=\sigma_d,
\qquad \Frob_{\fp}(L/K)=1.
\]
The first condition gives \eqref{eq:generator}; the second says that
$\fp$ splits completely in $L$. We therefore obtain
\[
\boxed{d'\equiv d\pmod I,\qquad
d'R\text{ is prime and splits completely in }L.}
\]

The intersection hypothesis guarantees compatibility. In general, with
$E=L\cap H_I$, the image of the restriction map consists of pairs
$(\sigma,\tau)$ satisfying $\sigma|_E=\tau|_E$. Thus a prescribed
automorphism of $H_I$ and the identity on $L$ can be combined only if the
former acts trivially on $E$. When $E=K$, there is no extra condition.

\section{The function-field cancellation lemma}

\begin{lemma}
\label{lem:cancel}
For $R=\cO_{K,S}$ as above, one has
\begin{equation}\label{eq:cancel}
(1+ab,bc)\sim(1+ab,c).
\end{equation}
The polynomial families realizing this equivalence can be chosen
independently of $K$ and $S$.
\end{lemma}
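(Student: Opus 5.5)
The strategy mirrors \cite[Proposition~1.4]{LN}. We reduce the cancellation \eqref{eq:cancel} to the square identity \eqref{eq:square} by moving $b$, inside its residue class modulo $1+ab$, to an element that is a unit times a square modulo $1+ab$. Only Trost-style elementary moves with fixed polynomial matrices are used, so the families will not depend on $K$ or $S$.

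\emph{Elementary moves.} If $(x,y)$ is primitive, right multiplication by $E_{12}(t)$ and $E_{21}(t)$ changes the first row to $(x,y+tx)$ and $(x+ty,y)$. These moves are polynomial in the parameters, so
\[
(1+ab,bc)\sim(1+ab,(b+k(1+ab))c)
\]
for any parameter $k$. Also, for a unit $z$, multiplying on the right by $\operatorname{diag}(z,z^{-1})$, which is a word of six elementary matrices, gives $(x,y)\sim(zx,z^{-1}y)$. Note that $b+k(1+ab)$ runs over the whole residue class of $b$ modulo $I=(1+ab)R$, and $b$ is invertible modulo $I$.

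\emph{Odd characteristic.} Take $d=b$ in \S\ref{sec-selecting-prime-generators} with $I=(1+ab)R$, assumed proper and nonzero; the degenerate cases where $1+ab$ is $0$ or a unit are handled by the unit-entry reduction. We produce $d'\equiv b\pmod I$ with $d'R=\fp$ prime, and in addition want $b$ to be a square times a unit modulo $d'$, or more usefully, $d'$ to be expressible through a square. The cleanest route is to use the splitting condition with $L=K(\sqrt{\cdot})$, a quadratic (Kummer) extension chosen so that $L\cap H_I=K$. This gives primes $\fp$ in which prescribed elements are squares, and lets us write $b'=b+k(1+ab)=v^2 w$ up to unit adjustments. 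Then we rewrite $1+ab$ as $1+a'b'$, with $a'$ chosen modulo $b'$, and apply \eqref{eq:square}:
\[
(1+uv,v^2w)\sim(1+uv,w).
\]
This has to be done twice, in the pattern of LN: first replace $b$ by a prime $d'$, then cancel using a square factorization obtained from Chebotarev applied to $H_IL$. The hypothesis $L\cap H_I=K$ must be secured by choosing the quadratic extension ramified at a prime outside $I$ and $S$, so that it is disjoint from $H_I$, which is unramified there. The number of elementary and square steps is bounded independently of the data, so a single fixed polynomial family results.

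\emph{Characteristic two.} Every finite residue field is perfect, so every element of $R/\fp$ is a square. Working modulo a prime $\fp\mid 1+ab$, or more generally using that squaring is surjective on $R/I$ for squarefree $I$, one writes $b\equiv v^2 w$ modulo $1+ab$ directly, and then applies \eqref{eq:square}. Non-squarefree $I$ needs a Hensel-type lift, or a preliminary move making $1+ab$ prime via the same Chebotarev step applied to the first coordinate instead.

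\emph{Main obstacle.} The hard step is converting existence statements (a prime $d'$, a square root modulo $\fp$) into a \emph{uniform} polynomial parametrization. Every choice must enter only as a free parameter of a fixed word. I will structure the argument so that the arithmetic is used only to show that suitable parameter values exist, while the matrix words are fixed in advance. Verifying the disjointness $L\cap H_I=K$ uniformly is the delicate arithmetic point.
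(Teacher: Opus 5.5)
There is a genuine gap: the mechanism you propose for reaching \eqref{eq:square} does not work. First, whether $b$ is a unit times a square modulo $1+ab$ depends only on the residue class of $b$ modulo $1+ab$. So the move $b\mapsto b+k(1+ab)$ can never create this property. Second, the factorization $b'=v^2w$ with $w$ unconstrained is vacuous (take $v=1$), and cancelling the leftover $w$ is the lemma itself.

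More fundamentally, \eqref{eq:square} requires the first coordinate to be $\equiv 1\pmod v$, where $v^2$ is the square extracted from the second coordinate. With the moves you allow, the first coordinate changes only by multiples of $b'c$, and $v\mid b'c$, so its class modulo $v$ is fixed. Your step ``rewrite $1+ab$ as $1+a'b'$'' would need $b'\mid ab$, and nothing in your plan arranges this. You also never specify the quadratic extension $L$ (the square root of which element), so the Chebotarev/splitting step has no definite role. In characteristic two the proposed Hensel lift is unavailable, because squaring is inseparable. For example, $1+\pi$ is not a square modulo $\fp^2$ for a uniformizer $\pi$, since $1+\pi\equiv y^2$ would give $\pi\equiv (y+1)^2$.

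The missing idea is to normalize $a$ modulo $c$, not $b$ modulo $1+ab$. The moves $c\mapsto c+(1+ab)d$ and $a\mapsto a+ce$ act compatibly on both sides of \eqref{eq:cancel}, and the target is $a\equiv -r^2\pmod{cR}$. Writing $a+r^2=ch$, one gets $(1+ab,bc)\sim(1-r^2b,bc)\sim(1-r^2b,(rb)^2c)$, because $r^2b\equiv 1$ modulo $1-r^2b$. Then \eqref{eq:square} with $u=-r$, $v=rb$, $w=c$, followed by $E_{21}(bh)$, gives $(1+ab,c)$.

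Your intuition that $b$ should be a square modulo the first coordinate is realized here, since $b\equiv r^{-2}$ modulo $1-r^2b$. But it is achieved by changing the first coordinate through $a$, and the congruence $1-r^2b\equiv 1\pmod{rb}$ that \eqref{eq:square} needs comes built in. The arithmetic input therefore concerns squares modulo $c$:
\begin{itemize}
\item In characteristic two, make $c$ prime in its class modulo $1+ab$ and use perfectness of $R/cR$.
\item In odd characteristic, make $c$ prime (modulo $1+ab$, prime to $a$), then $a$ prime (modulo $c$), and apply Chebotarev to $H_{(1+ab)R}\,K(\sqrt{-a})$. These fields are disjoint over $K$ because $K(\sqrt{-a})$ is ramified at $aR$ while $H_{(1+ab)R}$ is not. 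This is where your disjointness remark belongs, with the specific element $-a$.
\end{itemize}
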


\begin{proof}
Both rows are primitive simultaneously. Put $x=1+ab$. If $x$ is zero
or a unit, each primitive row in question has a unit entry, so the
elementary observation above applies. Thus assume that $x$ is nonzero and
nonunit. This, in particular, implies that $a$ is nonzero.

Two changes may be made simultaneously on both sides of
\eqref{eq:cancel}. First, replace $c$ by $c+xd$: the second coordinates
change by $xbd$ and $xd$, respectively. Second, replace $a$ by $a+ce$:
the first coordinate changes by $bce$, an elementary change using the
second coordinate on either side. These use a fixed number of elementary
operations.

We contend that there exists an element $r \in R$ such that $a \equiv -r^2 \pmod{cR}$. 

Suppose first that $\operatorname{char}K=2$. Replacing $c$ by a prime
generator in its residue class modulo $xR$, we see that the residue field $R/cR$
is finite and hence perfect. Consider the Frobenius map $F : R/cR \to R/cR$ that sends each element $u$ to $u^2$. Since $R$ is of characteristic $2$, we see that $F(u) = u^2 = v^2 = F(v)$ implies that $(u - v)^2 = 0$, and hence $u = v$ in the field $R/cR$. Thus $F$ is injective. Since $R/cR$ is finite, $F$ is also surjective, and thus $F$ is an isomorphism. Therefore there exists an element $r \in R$ such that $-a \equiv F(\bar r) \equiv r^2 \pmod{cR}$, and thus $a \equiv - r^2 \pmod{cR}$.

 Thus there is $r\in R$ with
$a\equiv-r^2\pmod{cR}$.

Now suppose that $\operatorname{char}K\ne2$. Since $a\ne 0$, there are only finitely many prime divisors of $a$. We first replace $c$ by a
prime generator in its residue class modulo $xR$ that is relatively prime to the prime
divisors of $a$. Then $aR + cR = R$, and thus one can replace $a$ by a prime generator
in its residue class modulo $cR$. Denote the resulting elements by the same symbols $a,c$, respectively, and put $x=1+ab$ again. If the new $x$ is a unit, then we can repeat the same arguments at the beginning of the proof. Otherwise, we observe that the quadratic extension
\[
L=K(\sqrt{-a})
\]
is ramified at the prime $aR$, since $aR$ is prime and the valuation of $-a$ at $aR$
is one (see \cite[Proposition 10.3]{Rosen}). On the other hand, $aR$ is prime to $xR$, and thus the ray class field 
$H_{xR}/K$ is unramified at $aR$. Therefore $L\cap H_{xR}=K$.
The prime-selection observation in Section \ref{sec-selecting-prime-generators} gives a prime generator $\pi$ in $R$ such that
\[
\pi\equiv c\pmod{xR},\qquad \pi R\text{ splits completely in }L,
\]
with $\pi R\ne aR$. Replace $c$ by $\pi$, and rename $\pi$ to be $c$ again. The prime $cR$ splits in $L$ if and only if $-a$ is a
square modulo $cR$. Thus $a\equiv-r^2\pmod{cR}$ for some $r\in R$.

In either characteristic, write $a+r^2=ch$ for some element $h \in R$. Then
\begin{align*}
(1+ab,bc)
&\sim(1-r^2b,bc)\\
&\sim\bigl(1-r^2b,bc-(1-r^2b)bc\bigr)\\
&=\bigl(1-r^2b,(rb)^2c\bigr)\\
&\sim(1-r^2b,c)\\
&\sim(1+ab,c).
\end{align*}
The penultimate equivalence uses \eqref{eq:square} with $u=-r$ and
$v=rb$, and the last equivalence follows from $a + r^2 = ch$ and the following elementary operation
\[
(1 - r^2b, c)E_{21}(bh) = (1 - r^2b + bch, c) = (1 + ab, c).
\]

 Every operation uses a fixed polynomial family. The chosen
primes and square roots affect parameter values only.
\end{proof}

This replaces the number-field argument of
\cite[Proposition~1.4]{LN}. Ramification at $aR$ ensures compatibility
of the splitting and congruence conditions, with no extra conditions at $S$.

\section{Power comparison with a uniform bound}

The following lemma is an analogue of \cite[Proposition 1.5]{LN} in the function field setting.

\begin{lemma}
\label{lem:power}

Let $M=\begin{pmatrix}a&b\\c&d\end{pmatrix}\in\SL_2(R)$, and let
$(a_n,b_n)$ be the first row of $M^n$. Then
\begin{equation}
\label{eq:power}
(a_n,b_n)\sim(a^n,b),\qquad n\ge0.
\end{equation}

\end{lemma}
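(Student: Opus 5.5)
The plan is to use Cayley--Hamilton to make the dependence on $n$ explicit, and then to apply Lemma~\ref{lem:cancel} once, with evaluated values as parameters, so that $n$ never enters the polynomial families.

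\textbf{Step 1: closed form for the first row.} Put $t=a+d$. Define $f_0=0$, $f_1=1$ and $f_{n+1}=tf_n-f_{n-1}$. Cayley--Hamilton gives $M^n=f_nM-f_{n-1}I$. Hence
\[
a_n=f_na-f_{n-1}=f_{n+1}-f_nd,\qquad b_n=f_nb .
\]
From $\det M^n=1$ one gets the Chebyshev identity $f_n^2-f_{n+1}f_{n-1}=1$.

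\textbf{Step 2: reduce modulo $b$.} Modulo $b$ the matrix $M$ is lower triangular. So $a_n\equiv a^n\pmod{bR}$, say $a_n=a^n+bg$ with $g\in R$.

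\textbf{Uniformity.} The values $f_n$, $f_{n\pm1}$, $g$ and similar quantities are ring elements, and I will feed them only as parameter values into fixed polynomial families. These families come from Lemma~\ref{lem:cancel}, from \eqref{eq:square}, and from a bounded number of elementary matrices $E_{12}$, $E_{21}$, $W$. Nothing in the families will depend on $n$, so the implied polynomially bounded sets are uniform in $n$. The cases $n=0,1$ are immediate.

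\textbf{Step 3: cancel the factor $f_n$.} The goal is $(a_n,f_nb)\sim(a_n,b)$. The target is Lemma~\ref{lem:cancel} with the $b$ of that lemma equal to $f_n$. This needs a first coordinate of the form $1+Af_n$.

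The first coordinate satisfies $a_n\equiv f_{n+1}\pmod{f_n}$ and $a_n\equiv -f_{n-1}\pmod{f_n}$. Since $f_{n+1}f_{n-1}\equiv-1\pmod{f_n}$, this gives $a_n^2\equiv1\pmod{f_n}$. The first coordinate is therefore only $\pm$-like modulo $f_n$, not $\equiv1$ on the nose.

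I would first try to replace the row by an equivalent one whose first entry is $a_nd_n$, or $a_n$ times a cofactor that is $\equiv a_n\pmod{f_n}$. Here $d_n=f_nd-f_{n-1}$, and $a_nd_n\equiv f_{n-1}^2\equiv1\pmod{f_n}$, because $a_nd_n-b_nc_n=1$ and $b_n$ is divisible by $f_n$. To make this replacement I would use the companion row of an explicit matrix with first row $(a_n,b_n)$, such as $M^n$ itself, together with the diagonal and elementary moves available in $\SL_2$.

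Once the first coordinate has the form $1+Af_n$ and the second has the form $f_nC$, Lemma~\ref{lem:cancel} removes $f_n$. The same moves run in reverse then restore the first coordinate $a_n$.

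\textbf{Step 4: finish.} From $(a_n,b)$, one elementary operation $E_{21}(-g)$ gives $(a_n-gb,b)=(a^n,b)$. This yields \eqref{eq:power}.

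\textbf{Main obstacle.} I expect Step~3 to be the main obstacle: arranging, with a bounded number of moves, a first coordinate $\equiv1\pmod{f_n}$ while keeping the second coordinate a multiple of $f_n$. If the $a_nd_n$ trick does not fit the row-equivalence framework directly, the fallback is \eqref{eq:square}. Writing $a_n^2=1+f_nk$, I would try to relate $(a_n,f_nb)$ to $(a_n^2,\cdot)$ and then cancel via $(1+uv,v^2w)\sim(1+uv,w)$ with $v=f_n$.
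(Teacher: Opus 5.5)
Your Steps 1, 2 and 4 are correct and agree with the paper. Step 3, which you rightly flag as the crux, is left open, however, and neither of your suggested routes closes it.

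\textbf{Why Step 3 fails as proposed.} Treating $f_n$ as a single factor cannot work. For $n=3$ one has $f_3=t^2-1$ and $a_3\equiv -t\pmod{f_3}$, which in general is neither $1$ nor $-1$ modulo $f_3$. So no single application of Lemma~\ref{lem:cancel}, even after a sign change, removes $f_3$.

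The $a_nd_n$ idea is not an available move. The only multiplicative changes of the first entry that bounded words provide are by units, through $\operatorname{diag}(z,z^{-1})$. You cannot multiply by $M^n$ or use its companion row, because $M^n$ is not yet known to lie in a polynomially bounded set; that is what is being proved. In fact, replacing $(\alpha,\beta)$ by $(\alpha\delta,\beta)$ uniformly for all matrices in $\SL_2(R)$ would already give Theorem~\ref{thm:main}, since $(\alpha\delta,\beta)=(1+\beta\gamma,\beta)\sim(1,0)$.

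The fallback through \eqref{eq:square} has the same defect. Passing from $(a_n,\cdot)$ to $(a_n^2,\cdot)$ is itself a power comparison of the kind you are trying to establish.

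\textbf{The missing idea.} Split $f_n$ already in $\Z[T]$ into a part on which $a_n\equiv 1$ and a part on which $a_n\equiv -1$. Write $f_j=Q_j(t)$. There are $V_n,W_n\in\Z[T]$ with $Q_n=V_nW_n$, $V_n\mid Q_{n-1}-1$ and $W_n\mid Q_{n-1}+1$. Explicitly, $V_{2k+1}=Q_{k+1}-Q_k$, $W_{2k+1}=Q_{k+1}+Q_k$, $V_{2k}=Q_{k+1}-Q_{k-1}$ and $W_{2k}=Q_k$; these identities are checked from the recurrence. With $v=V_n(t)$ and $w=W_n(t)$ you get $b_n=bvw$, $a_n\equiv 1\pmod{wR}$ and $a_n\equiv -1\pmod{vR}$.

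The argument then runs as follows.
\begin{enumerate}
\item Lemma~\ref{lem:cancel} removes $w$, giving $(a_n,bvw)\sim(a_n,bv)$.
\item The move $(\alpha,\beta)\sim(-\alpha,-\beta)$, coming from $(E_{12}(1)E_{21}(-1)E_{12}(1))^2=-I$, makes the first entry $\equiv 1\pmod{vR}$.
\item A second application of Lemma~\ref{lem:cancel} removes $v$.
\item Changing signs back gives $(a_n,b)$, and your Step 4 finishes.
\end{enumerate}
Because the factorization is a polynomial identity, the families are independent of $n$. The two-step cancellation is needed even in characteristic two, where $f_3=(t+1)^2$ and in general $a_3\equiv t\not\equiv 1\pmod{f_3}$.
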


\begin{proof}

Define $Q_{-1}(T) = -1$, $Q_0(T)=0$, $Q_1(T)=1$, and
$Q_{j+1}(T)=TQ_j(T)-Q_{j-1}(T)$. Set $u_j=Q_j(\tr M)$ for each $j \ge - 1$. By induction on $j$, we see that
\begin{align}
\label{eq:eq1-in-power-lem}
Q_{j + 1}(T)Q_{j - 1}(T) = Q_j(T)^2 - 1.
\end{align}
for $j \ge 0$.

By (\ref{eq:eq1-in-power-lem}), there are polynomials $V_n,W_n\in\Z[T]$ such that
\begin{equation}\label{eq:factor}
Q_n=V_nW_n,\qquad V_n\mid Q_{n-1}-1,\qquad
W_n\mid Q_{n-1}+1.
\end{equation}
Explicit choices are
\[
\begin{array}{c|cc}
n&V_n&W_n\\ \hline
2k+1&Q_{k+1}-Q_k&Q_{k+1}+Q_k\\
2k\ (k\ge1)&Q_{k+1}-Q_{k-1}&Q_k.
\end{array}
\]
These identities follow by induction from the recurrence. In particular,
they do not require unique factorization in $R$ and remain valid in
characteristic two.

By the Cayley--Hamilton theorem, and by induction on $n$, we have 
\begin{equation}
\label{eq:eq2-power}
M^n=u_nM-u_{n-1}I\qquad(n\ge1).
\end{equation}

Set $v=V_n(\tr M)$ and $w=W_n(\tr M)$. Then it follows from (\ref{eq:factor}) and (\ref{eq:eq2-power}) that
\[
b_n=bu_n = bvw,\qquad a_n\equiv-1\pmod{vR},\qquad
a_n\equiv1\pmod{wR}.
\]
 Since for arbitrary elements $\alpha, \beta \in R$, 
\begin{align*}
(\alpha, \beta)(E_{12}(1)E_{21}(-1)E_{12}(1))^2 = (-\alpha, -\beta),
\end{align*}
and thus
\[
(\alpha, \beta) \sim (-\alpha, -\beta).
\]

Hence applying Lemma~\ref{lem:cancel} to cancel $w$, then changing signs and
applying it again to cancel $v$, we obtain 
\[
(a_n,b_n)\sim(a_n,bv)\sim(-a_n,-bv)
\sim(-a_n,-b)\sim(a_n,b).
\]
Modulo $bR$, the matrix $M$ is lower triangular, so $a_n\equiv a^n$
modulo $bR$. One more elementary change proves \eqref{eq:power}.
The case $n=0$ is immediate.

\end{proof}

\section{Using Trost's lemmas}

Let $\F_q$ be the full constant field of $K$, and put $m=q-1$.
For a nonzero nonunit $z\in R$, we denote by $\eps(z)$ the exponent of the multiplicative group of units of $R/zR$, i.e., the smallest positive integer $e$ such that 
\[
u^e = 1
\]
for every element $u \in (R/zR)^{\times}$.

We use the following statements from Trost \cite{Trost}:
\begin{enumerate}
\item Lemma~3.1: up to at most three multiplications by elementary matrices, the
first row of a matrix in $\SL_2(R)$ is $(a^m,b)$ with $bR$ prime.

\item Lemma~3.3: if $bR$ is prime and $aR + bR =R$, one can choose $c$
with $bc\equiv-1\pmod{aR}$ and
$\gcd(\eps(b),\eps(c))=m$.
\end{enumerate}
Unit choices of $c$, if they occur, will be handled directly below.

These results apply to $\cO_{K,S}$. Indeed, by the Riemann--Roch theorem (see \cite{Rosen}), one may
choose a nonconstant function $t$ whose pole set is exactly $S$.
Then $R$ is the integral closure of $\F_q[t]$ in $K$, which is the
setup of Trost's paper; see also \cite[Remark~2.5]{Trost}.

\begin{proof}[Proof of Theorem~\ref{thm:main}]

Let $A\in\SL_2(R)$. By Trost's first lemma, a modification of $A$, denoted by
$A'$, using elementary matrices  has first row $(a^m,b)$ with $bR$ prime. If $a$ is a unit,
the row can be transformed into $(1, 0)$ by a product of a uniformly bounded number of elementary matrices. The case $a=0$
would force $b$ to be a unit since $aR + bR = R$, and so cannot occur with
$bR$ being prime. Thus assume $a$ is a nonzero nonunit.

Choose $c$ by Trost's second lemma and define
\[
d=\frac{1+bc}{a}\in R,\qquad
B=\begin{pmatrix}a&b\\c&d\end{pmatrix}\in\SL_2(R).
\]
If $c$ is a unit, apply Lemma~\ref{lem:power} to the transpose $B^T$ with exponent
$m$. Its comparison row $(a^m,c)$ has a unit entry, so $(B^T)^m$
and hence $B^m$ are polynomially bounded. The final comparison below
then completes this case. Also $c=0$ is impossible since
$bc\equiv-1\pmod{aR}$ and $aR$ is proper.

Assume therefore that $c$ is a nonzero nonunit. Since
$\gcd(\eps(b),\eps(c))=m$, there are positive integers $s,t$ such that $\eps(b)$ divides $t$, $\eps(c)$ divides $s$, and
\begin{equation}
\label{eq:exponents}
t-s=m.
\end{equation}

The determinant equation ensures that $a$ is invertible modulo both
$bR$ and $cR$. Consequently
\[
a^t\equiv1\pmod{bR},\qquad a^s\equiv1\pmod{cR}.
\]
A row of the form $(1+zh,z)$ reduces to $(1,0)$ in two elementary
operations. Applying Lemma~\ref{lem:power} to $B$ and $B^T$ shows
that $B^t$ and $(B^T)^s$ belong to fixed polynomially bounded families.
Taking transposes and inverses, the same is true of $B^{-s}$.
Therefore
\[
B^m=B^tB^{-s}
\]
belongs to a fixed polynomially bounded set in $\SL_2(R)$.

Finally, Lemma~\ref{lem:power} with exponent $m$ identifies the first
row of $B^m$, up to bounded equivalence, with $(a^m,b)$, the first
row of $A'$. This implies that $A'$ belongs to a fixed polynomially bounded set. Since $A'$ is obtained from $A$, using up to three elementary operations by Trost's first lemma, $A$ also belongs to a fixed polynomially bounded set in $\SL_2(R)$. Since $A$ is arbitrary in $\SL_2(R)$, $\SL_2(R)$ is polynomially parametrized.

All polynomial matrices used have integer coefficients and are drawn
from a fixed finite collection of universal families, their inverses
and transposes, and fixed products. The arithmetic choices, including
$m,s,t$, affect witness values only. The finitely many branches can
be included in one product of families containing the identity.
Thus a single polynomial matrix works for every $K,S$, as asserted.

\end{proof}


\begin{thebibliography}{9}


\bibitem{Auer}
R.~Auer,
\emph{Ray class fields of global function fields with many rational places},
Acta Arithmetica \textbf{95} (2000), 97--122.
\href{https://arxiv.org/abs/math/9803065}{arXiv:math/9803065}.

\bibitem{Goss}
D.~Goss,
\emph{Basic Structures of Function Field Arithmetic}, Ergebnisse der Mathematik und ihrer Grenzgebiete, Band 35, Springer, 1998.


\bibitem{LN}
M.~Larsen and D.~Q.~N.~Nguyen,
\emph{Polynomial Parametrization for $\SL_2$ over Quadratic Number Rings},
International Mathematics Research Notices \textbf{2021} (2021), no.~9,
6993--7003.
\href{https://doi.org/10.1093/imrn/rnz046}{doi:10.1093/imrn/rnz046}.



\bibitem{N}
D.~Q.~N.~Nguyen,
\emph{Certain sets over function fields are polynomial families}, Annali Scuola Normale Superiore--Classe Di Scienze, 17(1), 205--230. 


\bibitem{Rosen}
M.~Rosen,
\emph{Number Theory in Function Fields}, Graduate Texts in Mathematics 210, Springer, 2002.




\bibitem{Trost}
A.~A.~Trost,
\emph{Elementary bounded generation for $\SL_n$ for global function
fields and $n\ge3$}, Bulletin of the London Mathematical Society, Volume 56, Issue 1, 219--231.


\end{thebibliography}
\end{document}